\theoremstyle{plain}
\newtheorem{theorem}{Theorem}[section]
\newtheorem{proposition}[theorem]{Proposition}
\newtheorem{corollary}[theorem]{Corollary}
\newtheorem{lemma}[theorem]{Lemma}
\theoremstyle{definition}
\newtheorem{example}[theorem]{Example}
\newtheorem{question}[theorem]{Question}
\newcommand{\abs}[1]{\lvert#1\rvert}
\newcommand{\norm}[1]{\lVert#1\rVert}
\newcommand{\bigabs}[1]{\bigl\lvert#1\bigr\rvert}
\newcommand{\term}[1]{{\textit{\textbf{#1}}}}
\renewcommand{\mid}{\::\:}
\DeclareMathOperator{\Int}{Int}
\DeclareSymbolFont{bbold}{U}{bbold}{m}{n}
\DeclareSymbolFontAlphabet{\mathbbold}{bbold}
\def\one{\mathbbold{1}}
\begin{document}

\title[Locally piecewise affine functions]
{Locally piecewise affine functions and their order structure}

\author{S. Adeeb}
\address{Department of Civil and Environmental Engineering,
University of Alberta, Edmonton, AB, T6G\,1H9, Canada.}
\email{adeeb@ualberta.ca}

\author{V.~G. Troitsky}
\address{Department of Mathematical and Statistical Sciences,
         University of Alberta, Edmonton, AB, T6G\,2G1, Canada.}
\email{troitsky@ualberta.ca}

\thanks{The authors were supported by NSERC grants}
\keywords{affine function, piecewise affine function, locally
  piecewise affine function, vector lattice, sublattice}
\subjclass[2010]{Primary: 46A40. Secondary: 46E05}
\date{\today}

\begin{abstract}
  Piecewise affine functions on subsets of $\mathbb R^m$ were studied
  in
  \cite{Ovchinnikov:02,Aliprantis:06a,Aliprantis:07a,Aliprantis:07}. In
  this paper we study a more general concept of a locally piecewise
  affine function. We characterize locally piecewise affine functions
  in terms of components and regions. We prove that a positive
  function is locally piecewise affine iff it is the supremum of a
  locally finite sequence of piecewise affine functions. We prove
  that locally piecewise affine functions are uniformly dense in
  $C(\mathbb R^m)$, while piecewise affine functions are sequentially
  order dense in $C(\mathbb R^m)$.

  This paper is partially based on \cite{Adeeb:14}.
\end{abstract}

\maketitle

\section{Piecewise affine functions}

We start with a brief overview of affine and piecewise affine
functions; we refer the reader to \cite{Ovchinnikov:02,Aliprantis:06a}
and \cite[Chapter 7]{Aliprantis:07} for details.

Fix $m\in\mathbb N$. By an \term{affine} function we mean a function
$f\colon\mathbb R^m\to\mathbb R$ of form $f(x)=\langle v, x\rangle+b$ for some
$v\in\mathbb R^m$ and $b\in\mathbb R$. Its kernel $\{f=0\}$ is an
\term{affine hyperplane} in $\mathbb R^m$.
It is easy to see
that any two affine functions which agree on a non-empty open set are
equal. We write $\mathcal A$ for the vector space of all affine
functions on $\mathbb R^m$. Clearly, $\mathcal A$ is a subspace of
$C(\mathbb R^m)$, the space of all real-valued continuous functions on
$\mathbb R^m$.

Let $\Omega$ be a convex closed subset of $\mathbb R^m$ with non-empty
interior. Following \cite{Aliprantis:07}, we call such a set a
\term{solid domain}. A continuous function
$f\colon\Omega\to\mathbb R$ is called \term{piecewise affine} if
it agrees with finitely many affine functions. That is, there exist
$f_1\dots,f_n\in\mathcal A$ such that for every $x\in\Omega$
there exists $i\le n$ with $f(x)=f_i(x)$. Equivalently (see
Definition~7.10 and Theorem~7.13 in~\cite{Aliprantis:07}), a function
$f\colon\Omega\to\mathbb R$ is piecewise affine iff there exist
distinct affine functions $f_1,\dots,f_n$ and subsets $S_1,\dots,S_n$
of $\Omega$ such that
\begin{enumerate}
  \item on $S_i$, $f$ agrees with $f_i$;  
  \item\label{pa-perfect} $S_i=\overline{\Int S_i}\ne\varnothing$ for every $i$;
  \item $\Int S_i\cap\Int S_j=\varnothing$ whenever $i\ne j$;
  \item $\bigcup_{i=1}^nS_i=\Omega$.
\end{enumerate}
In particular, it follows from~\eqref{pa-perfect} that the set
$\{f=f_i\}$ has non-empty interior for every $i=1,\dots,n$. The sets
$\{f_i\}_{i=1}^n$, $\{S_i\}_{i=1}^n$, and
$\bigl\{(S_i,f_i)\bigr\}_{i=1}^n$ are called the \term{components},
\term{regions}, and \term{characteristic pairs} of $f$,
respectively. The characteristic pairs of $f$ are unique up to
re-ordering. We write $\mathcal{PA}(\Omega)$ for the space of all
piecewise affine functions on $\Omega$. We put
$\mathcal{PA}=\mathcal{PA}(\mathbb R^m)$.

Suppose $f\in\mathcal PA(\Omega)$ with characteristic pairs
$\bigl\{(S_i,f_i)\bigr\}_{i=1}^n$. For each pair $(i,j)$ with $1\le
i,j\le n$ and $i\ne j$, the set
\begin{math}
  \bigl\{x\in\mathbb R^m\mid f_i(x)=f_j(x)\bigr\},
\end{math}
or $\{f_i=f_j\}$ is a hyperplane in $\mathbb R^m$. Let $C$ be the
union of all such hyperplanes. Then $C$ is a closed set with empty
interior. It is easy to see that the set $(\Int\Omega)\setminus C$ is
open, dense in $\Omega$, and naturally
partitioned into a finite number of convex open subsets. We call these
subsets the \term{cells} of $f$.

Let $K$ be a cell of $f$. Since $K$ is disjoint from $C$, no two
components agree at any point of $K$. It follows that there exists a
component $f_i$ such that $f$ agrees with $f_i$ on $K$, so that
$K\subseteq S_i$. It also follows that for every $j\ne i$ we either
have $f_j(x)<f(x)$ for all $x\in K$ or $f_j(x)>f(x)$ for all $x\in K$.

\begin{example}
  Let $m=1$ and $f(t)=\min\bigl\{\abs{t},1\bigr\}$. Then $f$ is
  piecewise affine on $\mathbb R$, with components $f_1(t)=t$,
  $f_2(t)=-t$, and $f_3(t)=1$, and with regions $S_1=[0,1]$,
  $S_2=[-1,1]$, and $S_3=(-\infty,-1]\cup[1,+\infty)$. Note that $S_3$
  is not connected. The cells of $f$ are $K_1=(-\infty,-1)$,
  $K_2=(-1,0)$, $K_3=(0,1)$, and $K_4=(1,+\infty)$.
\end{example}

Recall that under pointwise order, the space $C(\Omega)$ is a vector
lattice. It is easy to see that $\mathcal{PA}(\Omega)$ is a (vector)
sublattice of $C(\Omega)$.

Let $A$ be a subset of a vector lattice $V$. Recall that
  \index{$A^\vee$}\index{$A^\wedge$}
  \begin{eqnarray*}
    A^\vee&=&\bigl\{x_1\vee\dots\vee x_n\mid
      n\in\mathbb N,\ x_1,\dots,x_n\in A\bigr\}
    \quad\mbox{and}\\
    A^\wedge&=&\bigl\{x_1\wedge\dots\wedge x_n\mid
      n\in\mathbb N,\ x_1,\dots,x_n\in A\bigr\}.
  \end{eqnarray*}
If $A$ is a linear subspace of $V$ then
$A^{\vee\wedge}=A^{\wedge\vee}$ and this set is exactly the sublattice
generated by $A$ in $V$; see, e.g., \cite[Lemma~1.21]{Aliprantis:07}.

Since $\mathcal A\subseteq\mathcal{PA}$ and $\mathcal{PA}$ is a
sublattice of $C(\mathbb R^m)$, it follows that
$\mathcal A^{\wedge\vee}\subseteq\mathcal{PA}$. In fact,
\cite[Theorem~7.23]{Aliprantis:07} asserts that
$\mathcal{PA}=\mathcal A^{\wedge\vee}$, hence $\mathcal{PA}$ is
exactly the sublattice of $C(\mathbb R^m)$ generated by $\mathcal
A$. Moreover, if $\Omega$ is a solid domain in $\mathbb R^m$ and
$f\in\mathcal{PA}(\Omega)$ with component set $F=\{f_i\}_{i=1}^n$ then
$f\in F^{\wedge\vee}$ or, more precisely, $f$ agrees with the
restriction to $\Omega$ of a function in $F^{\wedge\vee}$. Since every
function in $F^{\wedge\vee}$ is defined on all of $\mathbb R^m$, it
follows that every function in $\mathcal{PA}(\Omega)$ extends to a
function in $\mathcal{PA}$.

We write $B_\infty^m$ for the closed unit ball of
$\ell_\infty^m$:
\begin{displaymath}
  B_\infty^m=\bigl\{x\in\mathbb R^m\mid\max\limits_{1\le i\le
    m}\abs{x_i}\le 1\bigr\}. 
\end{displaymath}
For $n\in\mathbb N$, we write $\Omega_n$ for the ball of radius $n$
centred at zero in $\ell_\infty^m$; i.e., $\Omega_n=nB_\infty^m$.

\section{Locally piecewise affine functions}

A function $f\colon\mathbb R^m\to\mathbb R$ is said to be
\term{locally piecewise affine} if its restriction to any bounded
solid domain $\Omega$ is in $\mathcal{PA}(\Omega)$. The following
lemma is straightforward.

\begin{lemma}\label{on-balls}
  Let $f\colon\mathbb R^m\to\mathbb R$. Then $f$ is locally piecewise
  affine iff the restriction of $f$ to $\Omega_n$ is in
  $\mathcal{PA}(\Omega_n)$ for every $n\in\mathbb N$.
\end{lemma}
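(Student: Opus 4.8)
The plan is to verify both implications separately; the forward one is immediate, and the reverse one reduces to the single observation that restricting a piecewise affine function to a smaller solid domain again yields a piecewise affine function.

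For the forward implication, I would simply note that each $\Omega_n=nB_\infty^m$ is itself a bounded solid domain: it is closed, convex, bounded, and has non-empty interior. Hence if $f$ is locally piecewise affine, then by the very definition its restriction to $\Omega_n$ lies in $\mathcal{PA}(\Omega_n)$ for every $n$, which is exactly what is required.

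For the reverse implication, suppose $f|_{\Omega_n}\in\mathcal{PA}(\Omega_n)$ for every $n\in\mathbb N$, and let $\Omega$ be an arbitrary bounded solid domain. Since $\Omega$ is bounded and the balls $\Omega_n$ exhaust $\mathbb R^m$, there is some $n$ with $\Omega\subseteq\Omega_n$. By hypothesis, $f|_{\Omega_n}$ agrees with finitely many affine functions $f_1,\dots,f_k\in\mathcal A$ on $\Omega_n$; that is, for each $x\in\Omega_n$ there is $i\le k$ with $f(x)=f_i(x)$. Restricting this statement to the subset $\Omega\subseteq\Omega_n$, the same functions $f_1,\dots,f_k$ witness that for every $x\in\Omega$ there is $i\le k$ with $f(x)=f_i(x)$, and $f$ is continuous on $\Omega$ as the restriction of the continuous function $f|_{\Omega_n}$. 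By the primary definition of a piecewise affine function this shows $f|_\Omega\in\mathcal{PA}(\Omega)$, so $f$ is locally piecewise affine.

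I expect no genuine obstacle here. The one point requiring a little care is to argue from the primary definition of piecewise affine (agreement with finitely many affine functions) rather than from the characteristic-pair characterization, so that one need not rebuild the regions $S_i$ after restriction; passing to the equivalent description for the restricted function is then automatic by Theorem~7.13 of \cite{Aliprantis:07} but is not needed for the argument.
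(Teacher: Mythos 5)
Your proposal is correct, and it is exactly the routine argument the paper has in mind: the authors state this lemma without proof, calling it straightforward, and your two steps (each $\Omega_n$ is a bounded solid domain; any bounded solid domain sits inside some $\Omega_n$, and restricting a piecewise affine function to a smaller solid domain preserves piecewise affineness via the primary definition) are precisely the intended verification. Your closing remark is also well taken: working from the ``agrees with finitely many affine functions'' definition avoids any need to rebuild regions or characteristic pairs after restriction.
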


We write $\mathcal{LPA}$ for the collection of all locally piecewise
affine functions on $\mathbb R^m$. Clearly, $\mathcal{LPA}$ is a
sublattice of $C(\mathbb R^m)$ and
$\mathcal A\subsetneq\mathcal{PA}\subsetneq\mathcal{LPA}$.

\bigskip

Let $F\subset C(\mathbb{R}^m)$. We say that $F$ is \term{locally
  finite} if for every compact set $C$ all but finitely many functions
in $F$ vanish on $C$. That is, the set
\begin{math}
  \bigl\{f\in F\mid\exists x\in C\ f(x)\ne 0\bigr\}
\end{math}
is finite. Just as in Lemma~\ref{on-balls}, it is easy to see that $F$
is locally finite iff for every $n$ all but finitely many functions in
$F$ vanish on $\Omega_n$. Clearly, any subset of a locally finite
set of functions is again locally finite. Additionally, if
$F\subset C(\mathbb{R}^m)$ is a locally finite set of functions, then
$F^\vee$, $F^\wedge$ and consequently $F^{\vee\wedge}$ are all locally
finite. If $F$ is a sequence, then $F$ is termed a \term{locally finite
sequence of functions}.

It is a standard fact that $C(\mathbb R^m)$ is not order complete. Yet
we have the following.

\begin{lemma}\label{sup-lf}
  Every locally finite subset of $\mathcal{PA}$ has a supremum and an
  infimum in $C(\mathbb R^m)$, which belong to $\mathcal{LPA}$ and
  agree with the pointwise maximum and minimum, respectively.
\end{lemma}

\begin{proof}
  Let $F\subseteq\mathcal{PA}$ be a locally finite set. For every
  point $x\in\mathbb R^m$, all but finitely many members of $F$ vanish
  at $x$. Hence, $h(x)=\max\bigl\{f(x)\mid f\in F\bigr\}$ is
  defined. Similarly, on every open ball $B$ in $\mathbb R^m$, $h$ is
  the pointwise maximum of a finite subcollection of $F$, so that $h$ is
  continuous on $B$; it follows that $h\in C(\mathbb R^m)$. It follows
  from the definition that $h=\sup F$ in $C(\mathbb R^m)$. It also
  follows that on every ball, $h$ agrees with a finite subset of $F$,
  hence $h$ is locally piecewise affine.  The proof for the infimum
  is similar.
\end{proof}

We write $\one$ for the constant one function on $\mathbb R^m$.

\begin{lemma}\label{affine-Uryson}
  For every $\varepsilon>0$ there exists
  $f\in\mathcal{PA}$ such that $0\le f\le\one$, $f$
  equals $1$ on $B_\infty^m$, and vanishes outside of
  $(1+\varepsilon)B_\infty^m$.
\end{lemma}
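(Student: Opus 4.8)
The plan is to write $f$ down explicitly as a lattice expression in affine functions, using crucially that $B_\infty^m$ is a \emph{cube}, so that membership in the ball is detected coordinate by coordinate. First I would build a one-dimensional ``trapezoid'': for a single real variable $t$, the function equal to $1$ on $[-1,1]$, decreasing linearly to $0$ as $\abs{t}$ runs from $1$ to $1+\varepsilon$, and staying $0$ thereafter. Writing $\abs{t}=\max\{t,-t\}$, this trapezoid is $\max\bigl\{0,\min\{1,(1+\varepsilon-\abs{t})/\varepsilon\}\bigr\}$, which is patently a lattice expression in affine functions of $t$.

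Next I would lift this to $\mathbb R^m$. For each coordinate index $i$ introduce the affine functions $a_i^{\pm}(x)=\frac{1+\varepsilon\mp x_i}{\varepsilon}$, so that $a_i^+\wedge a_i^-$ equals $\frac{1+\varepsilon-\abs{x_i}}{\varepsilon}$, and set
\[
  f=0\vee\Bigl(\one\wedge\bigwedge_{i=1}^m\bigl(a_i^+\wedge a_i^-\bigr)\Bigr).
\]
Since $0$, $\one$, and every $a_i^{\pm}$ belong to $\mathcal A$, this displays $f$ as a member of $\mathcal A^{\wedge\vee}$, and the identification $\mathcal{PA}=\mathcal A^{\wedge\vee}$ recorded earlier gives $f\in\mathcal{PA}$ for free; in particular no separate argument for continuity or piecewise-affineness is needed.

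It then remains to verify the three required properties, which I would do by splitting $\mathbb R^m$ according to $\rho(x):=\max_i\abs{x_i}$. The bound $0\le f\le\one$ is immediate from the outer $0\,\vee$ and the inner $\one\,\wedge$. If $x\in B_\infty^m$ then $\abs{x_i}\le 1$ for all $i$, so each $a_i^+\wedge a_i^-\ge 1$, the inner meet is $\ge 1$, and $f(x)=1$. If $x\notin(1+\varepsilon)B_\infty^m$ then $\abs{x_j}>1+\varepsilon$ for some $j$, so $a_j^+\wedge a_j^-<0$, the inner meet is negative, and the outer $0\,\vee$ forces $f(x)=0$.

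I do not expect a genuine obstacle: the formula is essentially forced by the cube geometry of $B_\infty^m$. The only point deserving a moment's care is that a single outer clamp $0\vee(\cdot)$ reproduces the coordinatewise trapezoid behaviour, i.e.\ that $\min_i\max\{0,h_i\}=\max\{0,\min_i h_i\}$, valid because the clamp commutes with the outer minimum; verifying $f$ directly on the three regions sidesteps even this. By contrast, were the ball taken in the Euclidean norm, the coordinatewise construction would break down and one would have to approximate the sphere by a polytope, so it is exactly the $\ell_\infty$ setting that makes the closed-form expression available.
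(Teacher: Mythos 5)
Your proof is correct, and in substance it builds exactly the same function as the paper: your lattice expression $0\vee\bigl(\one\wedge\bigwedge_{i=1}^m(a_i^+\wedge a_i^-)\bigr)$ evaluates pointwise to $\max\bigl\{0,\min\{1,(1+\varepsilon-\norm{x}_\infty)/\varepsilon\}\bigr\}$, which is precisely the composition $h\circ g$ (with $g(x)=\norm{x}_\infty$ and $h$ the one-dimensional trapezoid) that the paper writes down. Where the two arguments genuinely differ is in how membership in $\mathcal{PA}$ is certified. The paper simply asserts that the composition of two piecewise affine functions is piecewise affine; this is true, but it is nowhere proved (nor even stated as a lemma) in the paper, so that step rests on an unreferenced closure property. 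You instead exhibit $f$ as an explicit element of $\mathcal A^{\wedge\vee}$ --- the inner expression $\one\wedge\bigwedge_i(a_i^+\wedge a_i^-)$ lies in $\mathcal A^{\wedge}$, the constant $0$ is affine, and their supremum lies in $\mathcal A^{\wedge\vee}$ --- and then invoke the identification $\mathcal{PA}=\mathcal A^{\wedge\vee}$, which the paper does record (Theorem~7.23 of Aliprantis--Tourky). So your route trades the composition fact for a cited characterization, making the piecewise-affineness completely self-contained, at the cost of a slightly bulkier formula; your three-region verification of $0\le f\le\one$, $f\equiv 1$ on $B_\infty^m$, and vanishing outside $(1+\varepsilon)B_\infty^m$ is complete and correct, and your closing remark rightly identifies the $\ell_\infty$ (cube) geometry as what makes the closed-form expression possible.
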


\begin{proof}
  Put $g(x)=\norm{x}_{\infty}=\max_{i\le m}\abs{x_i}$ for
  $x\in\mathbb R^m$. Put $h\colon\mathbb R_+\to\mathbb R_+$ so that
  $h$ is continuous, equals $1$ on $[0,1]$, vanishes on
  $[1+\varepsilon,+\infty)$ and is affine on $[1,1+\varepsilon]$. Let
  $f=h\circ g$. Being the composition of two piecewise affine
  functions, $f$ is itself piecewise affine. It is easy to verify that
  $f$ satisfies the rest of the requirements.
\end{proof}

\begin{theorem}\label{lf-sup}
  For every $0\le f\in\mathcal{LPA}$ there exists a locally finite
  sequence $(f_i)$ in $\mathcal{PA}_+$ such that
  $f=\sup f_i$.
\end{theorem}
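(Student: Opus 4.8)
The plan is to localise $f$ onto a sequence of bounded annular shells by truncating it from above with piecewise affine ``plateau'' functions. Write $r=\norm{x}_\infty$ and, for $n\in\mathbb N$, set $A_n=\{x: n-1\le r\le n\}$, so that $A_1=\Omega_1$ and $\bigcup_n A_n=\mathbb R^m$. The idea is to produce, for each $n$, a function $f_n\in\mathcal{PA}_+$ with $f_n\le f$, which coincides with $f$ on $A_n$ and vanishes off a slightly enlarged shell. The coincidence on $A_n$ will force $\sup_n f_n=f$, while the bounded, shell-shaped supports will give local finiteness.

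For the truncating functions, first I would rescale Lemma~\ref{affine-Uryson} (precomposing with the dilation $x\mapsto x/a$, which is affine) to obtain, for any $0<a<b$, a function in $\mathcal{PA}$ equal to $1$ on $\Omega_a$ and vanishing off $\Omega_b$. Using the same composition-with-$\norm{\cdot}_\infty$ device as in that lemma, I would build a piecewise affine trapezoidal bump $w_n$ with $0\le w_n\le C_n$, equal to the constant $C_n$ on $A_n$ and vanishing outside $\{n-\tfrac32\le r\le n+\tfrac12\}$, where $C_n:=1+\max_{\Omega_{n+1}}f$ is finite by continuity of $f$. Then set $f_n=f\wedge w_n$. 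Since $f\ge0$ and $w_n\ge0$ we get $f_n\in C(\mathbb R^m)$ with $0\le f_n\le f$; and on $A_n$ we have $w_n=C_n\ge f$, so $f_n=f$ there.

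The key verification is that each $f_n$ lies in $\mathcal{PA}$, i.e.\ is globally piecewise affine. Outside the bounded solid domain $\Omega=\Omega_{n+1}\supseteq\{r\le n+\tfrac12\}$ the bump $w_n$ vanishes, so $f_n=f\wedge 0=0$ there; on $\Omega$ both $f$ (since $f\in\mathcal{LPA}$) and $w_n$ are piecewise affine, hence so is $f_n$. Thus $f_n$ is continuous and agrees with the finite set consisting of the zero function together with the components of $f$ and of $w_n$ on $\Omega$; by the defining characterisation this puts $f_n\in\mathcal{PA}_+$.

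Finally I would check local finiteness and compute the supremum. As $f_n$ is supported in $\{n-\tfrac32\le r\le n+\tfrac12\}$, it can fail to vanish on $\Omega_N$ only for the finitely many $n$ with $n-\tfrac32\le N$; hence $(f_n)$ is locally finite. By Lemma~\ref{sup-lf} the supremum of $(f_n)$ then exists in $C(\mathbb R^m)$ and is computed pointwise. Since $f_n\le f$ for all $n$, and since every $x$ lies in some shell $A_n$ on which $f_n(x)=f(x)$, the pointwise supremum equals $f$, giving $f=\sup_n f_n$ as required. I expect the main obstacle to be the simultaneous demand of local finiteness and membership in $\mathcal{PA}$: this is exactly what forces the localisation onto shells rather than balls (balls would leave every $f_n$ nonzero near the origin) and makes the bounded support of each $f_n$ essential for upgrading ``piecewise affine on $\Omega_{n+1}$'' to ``piecewise affine on $\mathbb R^m$''.
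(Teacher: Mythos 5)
Your proof is correct and follows essentially the same route as the paper's: both truncate $f$ by piecewise affine plateau functions ($f_n=f\wedge w_n$, built as in Lemma~\ref{affine-Uryson}) over a locally finite cover of $\mathbb R^m$ by compact sets, verify membership in $\mathcal{PA}$ by combining ``piecewise affine on a bounded solid domain'' with ``zero outside it,'' and conclude via the pointwise supremum as in Lemma~\ref{sup-lf}. The only difference is cosmetic: you use concentric $\ell_\infty$-shells where the paper uses unit boxes centred at the integer lattice points.
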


\begin{proof}
  Let $(c_n)$ be an enumeration of all integer points in $\mathbb
  R^m$. Consider the ``boxes'' $K_n=c_n+B_\infty^m$ and $\widetilde
  K_n=c_n+2B_\infty^m$.

  Fix $n$. Since $f$ is continuous, it is bounded on $K_n$. Let $M>0$
  be such that $f(x)<M$ for all $x\in K_n$. As in
  Lemma~\ref{affine-Uryson}, we can find a piecewise affine function
  $g_n$ such that $0\le g_n\le M\one$, $g_n$ equals $M$ on $K_n$, and $g_n$
  vanishes outside $\widetilde K_n$. Put $f_n=f\wedge g_n$. We
  will show that the sequence $(f_n)$ satisfies all the requirements.

  By construction, $0\le f_n\le f$, $f_n$ agrees with $f$ on $K_n$ and
  vanishes outside $\widetilde K_n$.  Observe that $f_n$ is piecewise
  affine because on $\widetilde K_n$ it agrees with $f\wedge g_n$, which
  is piecewise affine on $\widetilde K_n$ and $f_n$ vanishes outside
  of $\widetilde K_n$. The sequence $(f_n)$ is locally finite because
  for every $n$ the only members of the sequence that do not vanish on
  $K_n$ are $f_n$ itself and the members corresponding to the
  (finitely many) boxes that meet $\widetilde K_n$. Finally,
  $f=\sup f_n$ because $0\le f_n\le f$ and $f_n$ agrees with $f$ on
  $K_n$ for every $n$.
\end{proof}

\section{Characteristic pairs of locally piecewise functions}

Recall that every piecewise affine function is determined by its components
and regions. The following theorem provides a similar characterization
for locally piecewise affine functions. 

\begin{theorem}
  A function $f\colon\mathbb R^m\to\mathbb R$ is locally piecewise
  affine iff there exist a sequence of distinct affine functions
  $(f_i)$ and a sequence $(S_i)$ of subsets of $\mathbb R^m$ such that
 \begin{enumerate}
  \item\label{lpa-agree} on $S_i$, $f$ agrees with $f_i$;  
  \item\label{lpa-perfect} $S_i=\overline{\Int S_i}\ne\varnothing$ for every $i$;
  \item\label{lpa-disjoint} $\Int S_i\cap\Int S_j=\varnothing$ whenever $i\ne j$;
  \item\label{lpa-union} $\bigcup_{i=1}^\infty S_i=\mathbb R^m$;
  \item\label{lpa-finite} every bounded set meets only finitely many $S_i$'s.
 \end{enumerate}
\end{theorem}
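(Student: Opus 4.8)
My plan is to prove both implications, treating the reverse implication first since it is routine. Suppose sequences $(f_i)$ and $(S_i)$ satisfying \eqref{lpa-agree}--\eqref{lpa-finite} are given. First I would check that $f$ is continuous: each $S_i=\overline{\Int S_i}$ is closed, the family $(S_i)$ is a closed cover of $\mathbb R^m$ by \eqref{lpa-union}, and it is locally finite by \eqref{lpa-finite}; since $f$ agrees with the continuous function $f_i$ on $S_i$, the pasting lemma for locally finite closed covers yields $f\in C(\mathbb R^m)$. Then, to see $f\in\mathcal{LPA}$, by Lemma~\ref{on-balls} it suffices to show that $f$ restricted to each $\Omega_n$ is piecewise affine. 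Fix $n$; by \eqref{lpa-finite} only finitely many regions, say $S_{i_1},\dots,S_{i_k}$, meet $\Omega_n$. Given $x\in\Omega_n$, \eqref{lpa-union} provides $i$ with $x\in S_i$, whence $i\in\{i_1,\dots,i_k\}$ and $f(x)=f_i(x)$ by \eqref{lpa-agree}. Thus $f$ agrees on $\Omega_n$ with the finitely many affine functions $f_{i_1},\dots,f_{i_k}$, so $f\in\mathcal{PA}(\Omega_n)$ by definition.

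For the forward implication, suppose $f\in\mathcal{LPA}$. The key idea is to recover global components and regions from the local affine structure. I would call an affine function $g$ a \emph{component of $f$} if $f$ agrees with $g$ on some non-empty open subset of $\mathbb R^m$. Any such $g$ agrees with $f$ on an open set meeting some $\Int\Omega_n$, hence is a component of the piecewise affine function $f|_{\Omega_n}$; since each $f|_{\Omega_n}$ has finitely many components and there are countably many $n$, the collection of components of $f$ is at most countable. Enumerate it as a sequence $(f_i)$ of distinct affine functions (non-empty, and possibly finite, in which case the statement is read with finitely many indices). For each $i$ set $W_i=\bigl\{x\in\mathbb R^m\mid f \text{ agrees with } f_i \text{ on a neighbourhood of } x\bigr\}$, an open set, and define $S_i=\overline{W_i}$.

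It remains to verify \eqref{lpa-agree}--\eqref{lpa-finite}. Conditions \eqref{lpa-agree}--\eqref{lpa-disjoint} are the comparatively soft part. Since $f=f_i$ on $W_i$ and both are continuous, $f=f_i$ on $S_i$, giving \eqref{lpa-agree}; and $W_i\ne\varnothing$ by the choice of $f_i$. As $W_i$ is open and contained in $S_i$, we have $W_i\subseteq\Int S_i\subseteq S_i=\overline{W_i}$, and taking closures yields $\overline{\Int S_i}=S_i$, which is \eqref{lpa-perfect}. For \eqref{lpa-disjoint}, a point in $\Int S_i\cap\Int S_j$ would have a neighbourhood on which $f=f_i$ and a neighbourhood on which $f=f_j$; on their intersection $f_i=f_j$, forcing $f_i=f_j$ since affine functions agreeing on a non-empty open set coincide, contradicting distinctness.

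The substantive work lies in \eqref{lpa-union} and \eqref{lpa-finite}, where I must pass between the global regions and the finite piecewise affine data on each $\Omega_n$. For \eqref{lpa-union}, given $x$ I choose $n$ with $x\in\Int\Omega_n$ and use the characteristic pairs $(S_j',f_j')$ of $f|_{\Omega_n}$: some $S_j'$ contains $x$, and on the open set $\Int S_j'$ the function $f$ equals $f_j'$, so $f_j'$ is one of the $f_i$ and $\Int S_j'\subseteq W_i$; taking closures gives $x\in S_j'=\overline{\Int S_j'}\subseteq S_i$. For \eqref{lpa-finite}, given a bounded set $B$ I pick $N$ so large that every point within $\ell_\infty$-distance $1$ of $B$ lies in $\Int\Omega_N$. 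If $S_i$ meets $B$, then some point of $W_i$ lies within distance $1$ of $B$, hence in $\Int\Omega_N$, so $f_i$ is a component of $f|_{\Omega_N}$; as the latter has only finitely many components and the $f_i$ are distinct, only finitely many $S_i$ meet $B$. The main obstacle I anticipate is the bookkeeping between interiors and closures taken in $\mathbb R^m$ versus relative to $\Omega_n$; I would dispose of it by always choosing $n$ large enough that the relevant points lie in $\Int\Omega_n$, where the two notions agree locally.
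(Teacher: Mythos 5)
Your proposal is correct, and its skeleton matches the paper's: both implications are organized the same way, and your regions are literally the paper's --- your $W_i$ coincides with the paper's $U_i=\Int\{f=f_i\}$, and $S_i=\overline{W_i}$ as in the paper, with the same verification of \eqref{lpa-agree}--\eqref{lpa-disjoint}. The differences are in the technical devices. For continuity in the reverse direction, the paper argues by contradiction, trapping a subsequence of $x_n\to x$ in a single closed $S_i$; you invoke the pasting lemma for locally finite closed covers, which is a cleaner appeal to a standard fact. In the forward direction, the paper proves \eqref{lpa-union} and \eqref{lpa-finite} via the \emph{cells} of $f_{|\Omega_n}$ (the convex open pieces cut out by the hyperplanes $\{f_i=f_j\}$, developed in Section~1) and the density of their union, whereas you work directly with the characteristic pairs of $f_{|\Omega_n}$ together with the lemma that an affine function agreeing with a piecewise affine function on a non-empty open set must be one of its components. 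That lemma is the one thing you use without proof (twice: for countability of the component set and for \eqref{lpa-finite}); it is true, and follows in one line from the density of $\bigcup_j\Int S_j'$ in $\Omega_n$ (each region is the closure of its interior and the regions cover $\Omega_n$) --- essentially the same density fact the paper extracts from cells, so you should record it explicitly. With that lemma stated, the two routes are of comparable length: the paper's is self-contained given its discussion of cells, while yours trades cells for component recognition, which arguably makes the verification of \eqref{lpa-union} and \eqref{lpa-finite} more transparent.
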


\begin{proof}
  Suppose that $f$ satisfies the five conditions; show that it is
  locally piecewise affine. First, observe that $f$ is
  continuous. Indeed, suppose that $x_n\to x$ in $\mathbb R^m$ but
  $f(x_n)\not\to f(x)$. Passing to a subsequence, we can find an
  $\varepsilon>0$ such that $\bigabs{f(x_n)-f(x)}>\varepsilon$ and
  $x_n\in B(x,1)$ for every $n$. By~\eqref{lpa-finite}, the ball
  $B(x,1)$ meets only finitely many $S_i$'s. Hence, passing to a
  further subsequence, we may assume that the sequence $(x_n)$ is
  contained in $S_i$ for some $i$. Since $S_i$ is closed, we have
  $x\in S_i$. It follows that $f(x_n)=f_i(x_n)\to f_i(x)=f(x)$; a
  contradiction. Thus, $f$ is continuous. By~\eqref{lpa-finite} again,
  every bounded solid domain $\Omega$ is contained in the union of finitely
  many $S_i$'s; it follows that the restriction of $f$ to $\Omega$ is
  in $\mathcal{PA}(\Omega)$, so that $f$ is locally piecewise
  affine.

  Suppose now that $f$ is locally piecewise affine. Fix
  $n\in\mathbb N$. The restriction of $f$ to $\Omega_n$ is in
  $\mathcal{PA}(\Omega_n)$; consider the affine components of this
  restriction. By collecting the affine functions associated with
  $\Omega_n$ for all $n\in\mathbb N$, we produce a sequence $(f_i)$
  in $\mathcal A$ such that for every $n$ there exists $k_n$ such that
  $f$ agrees with $f_1,\dots,f_{k_n}$ on $\Omega_n$. By dropping
  repeated terms, we may assume without loss of generality that all
  the members in this sequence are distinct.

  For every $i$, put $U_i=\Int\{f=f_i\}$ and
  $S_i=\overline{U_i}$. Since $f_i$ is a component of the
  restriction of $f$ to $\Omega_n$ for some $n$, it follows that
  $U_i\ne\varnothing$. It is easy to see that \eqref{lpa-agree}
  and~\eqref{lpa-perfect} are satisfied. To show~\eqref{lpa-disjoint},
  suppose that $\Int S_i$ meets $\Int S_j$ for some $i$ and $j$. This
  follows that $f_i$ and $f_j$ agree on an open set, hence $f_i=f_j$
  and, therefore, $i=j$.

  Fix $x\in\mathbb R^m$ and take $n\in\mathbb N$ such that
  $x\in\Int(\Omega_n)$. Let $K_1,\dots,K_l$ be the cells
  generated in $\Omega_n$ by $f_1,\dots,f_{k_n}$. Since
  $\bigcup_{j=1}^lK_p$ is dense in $\Omega_n$, we have
  $x\in\overline{K_p}$ for some $p\le l$. Note that $K_p$ is open and
  $f$ agrees with some $f_i$ on $K_p$. It follows that
  $K_p\subseteq U_i$ and, therefore, $x\in S_i$. This
  yields~\eqref{lpa-union}.

  Finally, we will show that $\Int(\Omega_n)$ only meets
  $S_1,\dots,S_{k_n}$; this will immediately yield~\eqref{lpa-finite}.
  Suppose that $\Int(\Omega_n)$ meets $S_i$ for some
  $i\in\mathbb N$. It follows that $\Int(\Omega_n)$ meets
  $U_i$. This implies that $U_i$ meets $K_r$ for some $r\le l$. Note
  that $f$ agrees with $f_j$ on $K_r$ for some $j\le k_n$. It follows
  that $f_j$ and $f_i$ agree on the non-empty open set $K_r\cap U_i$,
  hence $j=i$ and, therefore, $i\le k_n$.
\end{proof}

As before, the sets $\{f_i\}_{i=1}^\infty$, $\{S_i\}_{i=1}^\infty$, and
$\bigl\{(S_i,f_i)\bigr\}$ are called the \term{components},
\term{regions}, and \term{characteristic pairs} of $f$,
respectively.

\section{$\mathcal{LPA}$ is uniformly dense in $C(\mathbb R^m)$}

It is of interest whether every continuous function on $\mathbb R^m$
can be approximated in some sense by piecewise affine functions. We
will consider two kinds of denseness here: uniform denseness and order
denseness. We start with uniform denseness. It is easy to see that
$\mathcal{PA}$ is not uniformly dense in $C(\mathbb R^m)$ even for
$m=1$.

\begin{theorem}
  $\mathcal{LPA}$ is uniformly dense in $C(\mathbb R^m)$. That is, for
  every $f\in C(\mathbb R^m)$ and every $\varepsilon>0$ there exists
  $h\in\mathcal{LPA}$ such that $\abs{f-h}\le\varepsilon\one$.
\end{theorem}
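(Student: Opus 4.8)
The plan is to reduce to the case of a nonnegative $f$ and then to realize the approximant as a supremum of a locally finite sequence of compactly supported piecewise affine functions, in the spirit of Theorem~\ref{lf-sup}. Concretely, writing $f=f^+-f^-$ with $f^+=f\vee 0$ and $f^-=(-f)\vee 0$, both continuous and nonnegative, it suffices to approximate each of $f^+$ and $f^-$ within $\tfrac{\varepsilon}{2}$ uniformly by nonnegative functions $h_1,h_2\in\mathcal{LPA}$; then $h=h_1-h_2\in\mathcal{LPA}$, since $\mathcal{LPA}$ is a vector sublattice, and $\abs{f-h}\le\varepsilon\one$. Thus I would fix a continuous $g\ge 0$ and seek $h\in\mathcal{LPA}$ with $g-\varepsilon\one\le h\le g$.

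The building blocks are local uniform approximations glued by a locally finite supremum. Let $(c_n)$ enumerate the integer points of $\mathbb R^m$ and set $K_n=c_n+B_\infty^m$ and $\widetilde K_n=c_n+2B_\infty^m$, so that $\bigcup_n K_n=\mathbb R^m$. On each compact box $\widetilde K_n$ the restriction of $g$ is uniformly continuous, so $g$ can be approximated within $\tfrac{\varepsilon}{2}$ in the uniform norm on $\widetilde K_n$ by a piecewise affine $p_n$; this base fact---uniform density of $\mathcal{PA}(\Omega)$ in $C(\Omega)$ on a bounded solid domain---is classical and follows, e.g., from a fine simplicial subdivision of the box together with the piecewise linear interpolant of $g$ at the vertices, which extends to a member of $\mathcal{PA}$ as observed earlier. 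Using Lemma~\ref{affine-Uryson}, translated to $c_n$ and scaled to height $M_n:=1+\max_{\widetilde K_n}g$, produce $g_n\in\mathcal{PA}$ with $0\le g_n\le M_n\one$, $g_n=M_n$ on $K_n$, and $g_n=0$ off $\widetilde K_n$. Now put $q_n=\bigl((p_n-\tfrac{\varepsilon}{2}\one)\vee 0\bigr)\wedge g_n$. Each $q_n$ lies in $\mathcal{PA}$, vanishes outside $\widetilde K_n$, and the sequence $(q_n)$ is locally finite; hence by Lemma~\ref{sup-lf} the pointwise supremum $h=\sup_n q_n$ belongs to $\mathcal{LPA}$ and is nonnegative.

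It remains to check the two-sided estimate. Since $p_n\le g+\tfrac{\varepsilon}{2}\one$ on $\widetilde K_n$ and $g\ge 0$, we get $(p_n-\tfrac{\varepsilon}{2}\one)\vee 0\le g$ there, so $q_n\le g$ on $\widetilde K_n$ and trivially $q_n=0\le g$ elsewhere; thus $h\le g$. For the lower bound, fix $x$ and choose $n$ with $x\in K_n$: there $g_n=M_n\ge g\ge(p_n-\tfrac{\varepsilon}{2}\one)\vee 0$, so $q_n=(p_n-\tfrac{\varepsilon}{2}\one)\vee 0\ge p_n-\tfrac{\varepsilon}{2}\one\ge g-\varepsilon\one$, whence $h(x)\ge g(x)-\varepsilon$. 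Combining gives $g-\varepsilon\one\le h\le g$, as required.

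The main obstacle is conceptual rather than computational: the truncation $(\cdot)\vee 0$ is forced on us because compact support---needed to invoke the locally finite supremum of Lemma~\ref{sup-lf}, given that affine functions are unbounded---can only be achieved by cutting the local approximants down to zero off $\widetilde K_n$. This truncation makes every $q_n$, and hence $h$, nonnegative, which is exactly why the reduction to nonnegative $g$ via the splitting $f=f^+-f^-$ is essential and must precede the construction. The remaining subtlety is pure bookkeeping: one must verify that wedging with the bump $g_n$ does not spoil the lower bound on $K_n$ (guaranteed by taking $M_n$ above the local maximum of $g$) while simultaneously enforcing both compact support and the upper bound $q_n\le g$. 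The only external input is the base-case density of $\mathcal{PA}$ on a single compact box, and this avoids any global ``adaptive mesh'' difficulty precisely because each box carries its own independently chosen fine subdivision.
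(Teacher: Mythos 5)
Your proof is correct, and its skeleton is the paper's: split $f=f^+-f^-$, cover $\mathbb R^m$ by the integer boxes $K_n\subseteq\widetilde K_n$, approximate $f$ locally by piecewise affine functions, cut these down using the Urysohn-type bumps of Lemma~\ref{affine-Uryson}, and glue via the locally finite supremum of Lemma~\ref{sup-lf}. Two points genuinely differ. First, for the local approximation step the paper invokes the lattice version of the Stone--Weierstrass theorem ($\mathcal{PA}(\widetilde K_n)$ is a sublattice of $C(\widetilde K_n)$ containing the constants and separating points), whereas you appeal to a fine simplicial subdivision and the affine interpolant at the vertices; both are legitimate, yours being more constructive but asserted rather than proved, the paper's being a one-line citation. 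Second, your extra truncation $(\,\cdot\,)\vee 0$ before wedging with $g_n$ is not mere bookkeeping: it forces $0\le q_n\le g_n$, so $q_n$ genuinely vanishes off $\widetilde K_n$, which is exactly what local finiteness and Lemma~\ref{sup-lf} require. In the paper's argument the corresponding functions $\tilde h_n\wedge g_n$ need not vanish off $\widetilde K_n$: the global extension $\tilde h_n$ can be negative there (nonconstant piecewise affine functions are unbounded below), and then $\tilde h_n\wedge g_n=\tilde h_n<0$; the paper's claim that these functions vanish outside $\widetilde K_n$ is therefore a small, easily repaired imprecision, and your truncation repairs precisely it. A side effect of your version is the one-sided estimate $g-\varepsilon\one\le h\le g$ (approximation from below) in place of the paper's two-sided one; this is harmless, and the final reduction via $f^+$ and $f^-$ is identical in both arguments.
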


\begin{proof}
  The proof is analogous to that of Theorem~\ref{lf-sup}. Let
  $f\in C(\mathbb R^m)$ and $\varepsilon>0$. 

  First, we consider the case when $f\ge 0$.  Let $(c_n)$ be an
  enumeration of all integer points in $\mathbb R^m$. Consider the
  ``boxes'' $K_n=c_n+B_\infty^m$ and $\widetilde K_n=c_n+2B_\infty^m$.

  Fix $n$. Observe that $\mathcal{PA}(\widetilde K_n)$ is a sublattice of
  $C(\widetilde K_n)$. It clearly contains constant functions. It is
  easy to see that it separates points. Indeed, let
  $x,y\in\widetilde K_n$ with $x\ne y$. Then $x_j\ne y_j$ for some
  $j\le m$. That is, $e_j^*(x)\ne e_j^*(y)$,
  where $e_j^*$ is the $j$-th coordinate functional. Clearly, $e_j^*$ is an
  affine function. Since $\widetilde K_n$ is compact, it now follows
  from the lattice version of Stone-Weierstrass Theorem that
  $\mathcal{PA}(\widetilde K_n)$ is uniformly dense in
  $C(\widetilde K_n)$, see, e.g., Theorem~11.3 on page~88
  of~\cite{Aliprantis:98}. It follows that there is
  $h_n\in\mathcal{PA}(\widetilde K_n)$ such that
  $\bigabs{h_n(x)-f(x)}<\varepsilon$ for all $x\in\widetilde
  K_n$. Extend $h_n$ to a piecewise affine function $\tilde h_n$
  in $\mathcal{PA}$. 

  Since $h_n$ is continuous on $\widetilde K_n$, it is bounded on
  $\widetilde K_n$. Let $M_n>0$ be such that $h_n(x)<M_n$ for all
  $x\in\widetilde K_n$. As in Lemma~\ref{affine-Uryson}, we can find
  $g_n\in\mathcal{PA}$ such that $0\le g_n\le M_n\one$, $g_n$ equals
  $M_n$ on $K_n$, and $g_n$ vanishes outside $\widetilde K_n$.

  Consider the sequence of functions
  $\bigl(\tilde h_n\wedge g_n\bigr)$. Since $g_n$ vanishes outside of
  $\widetilde K_n$, all but finitely many terms of this sequence
  vanish on $K_n$; it follows that the sequence
  $\bigl(\tilde h_n\wedge g_n\bigr)$ is locally finite. Let
  $h=\sup_n\tilde h_n\wedge g_n$; by Lemma~\ref{sup-lf} this supremum
  exists, agrees with the pointwise maximum, and belongs to $\mathcal{LPA}$.

  For every $n$ and every $x\in K_n$, we have 
  \begin{displaymath}
    h(x)\ge(\tilde h_n\wedge g_n)(x)=h_n(x)\wedge M_n=h_n(x)\ge f(x)-\varepsilon.
  \end{displaymath}
  It follows that $h\ge f-\varepsilon\one$, so that $h-f\ge
  -\varepsilon\one$.

  On the other hand, for every $n$ and every $x\in\widetilde K_n$ we
  have
  \begin{displaymath}
     (\tilde h_n\wedge g_n)(x)\le h_n(x)\le f(x)+\varepsilon. 
  \end{displaymath}
  Since $\tilde h_n\wedge g_n\le g_n$, it vanishes outside of
  $\widetilde K_n$. It follows that
  $\tilde h_n\wedge g_n\le f+\varepsilon\one$. Taking sup over $n$ we
  get $h\le f+\varepsilon\one$, so that
  $h-f\le\varepsilon\one$. Therefore, $\abs{h-f}\le\varepsilon\one$.

  Suppose now that $f$ is an arbitrary function in $C(\mathbb
  R^m)$. Then $f=f^+-f^-$, with $0\le f^+,f^-\in C(\mathbb R^m)$. By
  the first part of the proof, we find $h_1$ and $h_2$ in
  $\mathcal{LPA}$ such that
  $\abs{f^+-h_1}\le\frac{\varepsilon}{2}\one$ and
  $\abs{f^--h_2}\le\frac{\varepsilon}{2}\one$. Put $h=h_1-h_2$, then
  \begin{displaymath}
    \abs{f-h}\le\abs{f^+-h_1}+\abs{f^--h_2}\le\varepsilon\one.
  \end{displaymath}
\end{proof}

\section{$\mathcal{PA}$ is $\sigma$-order dense in $C(\mathbb R^m)$}

Recall that a sublattice $F$ of an Archimedean vector lattice $E$ is
said to be \term{order dense} if for every $0<x\in E$ there exists
$y\in F$ with $0<y\le x$. Equivalently, for every $0<x\in E$ we have
$x=\sup A$ where $A=\{y\in F\mid 0\le y\le x\}$; see, e.g.,
\cite[Theorem~1.34]{Aliprantis:06}. Note that the set $A$ may be
viewed as an increasing net. Therefore, $F$ is order dense in $E$ iff
every element of $E_+$ is the supremum of an increasing net in $F_+$.

Let $f\in C(\mathbb R^m)$. Suppose that $f>0$; that is, $f(x)\ge 0$
for every $x\in\mathbb R^m$ and $f(c)>0$ for some $c\in\mathbb R^m$.
It follows easily from Lemma~\ref{affine-Uryson} that there exists a
locally piecewise function $h$ such that $0<h\le f$. This means that
$\mathcal{PA}$ is an order dense sublattice in $C(\mathbb R^m)$.  It
follows that if $0\le f\in C(\mathbb R^m)$ then $f$ is the supremum of
an increasing net $(f_\alpha)$ in $\mathcal{PA}$, that is,
$f_\alpha\uparrow f$. This leads to two natural questions.

\begin{question}\label{q:seq}
  Can we replace a net with a sequence?
\end{question}

\begin{question}\label{q:non-pos}
  How do we deal with non-positive functions?
\end{question}

The answer to the first question is related to the countable sup
property. Recall that a vector lattice $E$ has the \term{countable sup
  property} if every increasing net $(x_\alpha)$ in $E$ with
$x=\sup x_\alpha$ has an increasing subsequence with the same
supremum. Thus, to answer the first question, it suffices to prove
that $C(\mathbb R^m)$ has the countable sup property.

Recall that for a sequence $(x_\alpha)$ in a vector lattice $E$, the
notation $x_\alpha\uparrow$ means $(x_\alpha)$ is increasing;
$x_\alpha\uparrow\le x$ means $x_\alpha\uparrow$ and $x_\alpha\le x$
for every $\alpha$, and $x_\alpha\uparrow x$ means $x_\alpha\uparrow$
and $x=\sup_\alpha x_\alpha$. Note that $f_\alpha\uparrow f$ in
$C(\Omega)$ does not imply pointwise convergence. For
$f,g\in C(\mathbb R^m)$, the notation $f<g$ means $f\le g$ and
$f\ne g$. For $f\in C(\mathbb R^m)$ and a solid domain $\Omega$ in
$\mathbb R^m$, we write $f_{|\Omega}$ for the restriction of $f$ to
$\Omega$; we view it as an element of $C(\Omega)$. 

\begin{lemma}\label{restr}
  Let $(f_\alpha)$ be a net in $C(\mathbb R^m)$ and
  $f\in C(\mathbb R^m)$.  Then $f_\alpha\uparrow f$ in
  $C(\mathbb R^m)$ iff ${f_\alpha}_{|\Omega_n}\uparrow f_{|\Omega_n}$
  in $C(\Omega_n)$ for every $n$.
\end{lemma}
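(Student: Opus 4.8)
The plan is to reduce everything to the pointwise order and to treat separately the ``increasing'' clause and the ``supremum'' clause. For the increasing clause, note that $h_1\le h_2$ in $C(\mathbb R^m)$ means $h_1(x)\le h_2(x)$ for every $x$, which holds iff $h_1\le h_2$ on each $\Omega_n$ (since $\bigcup_n\Omega_n=\mathbb R^m$); consequently $(f_\alpha)$ is increasing in $C(\mathbb R^m)$ iff each restricted net $({f_\alpha}_{|\Omega_n})$ is increasing, and $f$ is a pointwise upper bound of $(f_\alpha)$ iff each $f_{|\Omega_n}$ is an upper bound of $({f_\alpha}_{|\Omega_n})$. Thus the only real content is that ``$f$ is the least continuous upper bound on $\mathbb R^m$'' is equivalent to ``$f_{|\Omega_n}$ is the least continuous upper bound on $\Omega_n$ for every $n$.''

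I would dispose of the direction $(\Leftarrow)$ first, since it is routine. Assume each $f_{|\Omega_n}$ is the supremum in $C(\Omega_n)$, and let $g\in C(\mathbb R^m)$ satisfy $g\ge f_\alpha$ for all $\alpha$. Restricting to $\Omega_n$ gives $g_{|\Omega_n}\ge{f_\alpha}_{|\Omega_n}$ for all $\alpha$, whence $g_{|\Omega_n}\ge f_{|\Omega_n}$ by the assumed minimality; as this holds for every $n$ we conclude $g\ge f$ on $\mathbb R^m$, so $f=\sup_\alpha f_\alpha$ in $C(\mathbb R^m)$.

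The hard direction, and the main obstacle, is $(\Rightarrow)$: one must show that a continuous upper bound defined only on $\Omega_n$ is forced to dominate $f$ there, even though nothing is known about upper bounds off $\Omega_n$. Assume $f=\sup_\alpha f_\alpha$ globally, fix $n$, and suppose toward a contradiction that some $g\in C(\Omega_n)$ with $g\ge{f_\alpha}_{|\Omega_n}$ for all $\alpha$ fails $g\ge f_{|\Omega_n}$. Then $\{g<f\}$ is relatively open and nonempty in $\Omega_n$, and since $\Int\Omega_n$ is dense in $\Omega_n$ it contains a point $x_0\in\Int\Omega_n$; fix $\delta>0$ with $\overline{B(x_0,\delta)}\subseteq\{x\in\Int\Omega_n\mid g(x)<f(x)\}$. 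The idea is to perturb $f$ downward only on this small ball by forming a convex combination with $g$: choose a continuous bump $\psi\colon\mathbb R^m\to[0,1]$ with $\psi(x_0)=1$ and $\psi=0$ off $B(x_0,\delta)$, and set $G=f-\psi\,(f-g)$, where the correction term $\psi\,(f-g)$ is read as $0$ outside $\overline{B(x_0,\delta)}$.

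Then I would verify three things. First, $G\in C(\mathbb R^m)$: the correction is continuous on $\overline{B(x_0,\delta)}$ and vanishes near its boundary and outside, so it glues to a global continuous function agreeing with $f$ away from the bump. Second, $G$ is an upper bound of the net: off $\overline{B(x_0,\delta)}$ we have $G=f\ge f_\alpha$, while on $\overline{B(x_0,\delta)}\subseteq\Omega_n$ we have $G=(1-\psi)f+\psi g\ge(1-\psi)f_\alpha+\psi f_\alpha=f_\alpha$, using $0\le\psi\le1$ together with $f\ge f_\alpha$ and $g\ge f_\alpha$ on $\Omega_n$. Third, $G(x_0)=g(x_0)<f(x_0)$. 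These contradict $f=\sup_\alpha f_\alpha$, which forces $G\ge f$ and hence $G(x_0)\ge f(x_0)$; the contradiction yields $g\ge f$ on $\Omega_n$ and completes the proof. The one point to state carefully is the continuity and support of the correction term, so that $G$ coincides with $f$ outside the bump; everything else is the pointwise convex-combination estimate.
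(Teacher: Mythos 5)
Your proof is correct and follows essentially the same route as the paper: the backward direction and the monotonicity/upper-bound clauses are handled by pointwise restriction, and the forward direction proceeds by contradiction, perturbing $f$ downward on a small ball around an interior point of $\Omega_n$ where the putative upper bound $g$ dips below $f$, thereby producing a global continuous upper bound strictly smaller than $f$. The only cosmetic difference is the form of the perturbation: the paper subtracts a small additive bump $h$ with $0<h<\varepsilon$ supported where $g<f-\varepsilon$, while you form the convex combination $G=(1-\psi)f+\psi g$, which avoids introducing the uniform gap $\varepsilon$ but is otherwise the same idea.
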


\begin{proof}
  Suppose that $f_\alpha\uparrow f$ in $C(\mathbb R^m)$. Fix $n$.  It
  is clear that $({f_\alpha}_{|\Omega_n})$ is an increasing net in
  $C(\Omega_n)$ and $f_{|\Omega_n}$ is an upper bound of this
  net. Suppose it is not the least upper bound. Then there exists
  $g\in C(\Omega_n)$ such that ${f_\alpha}_{|\Omega_n}\le g$ for every
  $\alpha$ but $g(t_0)<f(t_0)$ for some $t_0\in\Omega_n$. Since
  $\Omega_n$ is a solid domain, we may assume that
  $t_0\in\Int\Omega_n$. It follows easily that there exists a
  neighborhood $V$ of $t_0$ and $\varepsilon>0$ such that
  $V\subseteq\Int\Omega_n$ and $g(t)<f(t)-\varepsilon$ for all
  $t\in V$. Find $0<h\in C(\mathbb R^m)$ such that $h(t_0)>0$,
  $h(t)<\varepsilon$ for all $t\in V$, and $h$ vanishes outside
  $V$. Then $f-h<f$, yet $f_\alpha\le f-h$ for every $\alpha$; this
  contradicts $f=\sup f_\alpha$.

  Conversely, suppose that ${f_\alpha}_{|\Omega_n}\uparrow f_{|\Omega_n}$
  in $C(\Omega_n)$ for every $n$. First, we claim that
  $f_\alpha\uparrow\le f$ in $C(\mathbb R^m)$. Indeed, fix any
  $t_0\in\mathbb R^m$. Find $n$ so that $t_0\in\Omega_n$. It follows
  from ${f_\alpha}_{|\Omega_n}\uparrow f_{|\Omega_n}$ that
  $f_\alpha(t_0)\uparrow\le f(t_0)$. In particular, $f$ is an upper
  bound of $(f_\alpha)$ in $C(\mathbb R^m)$.

  On the other hand, suppose that $g$ is an upper bound of
  $(f_\alpha)$ in $C(\mathbb R^m)$. Fix any n.  Then
  $g_{|\Omega_n}\ge{f_\alpha}_{|\Omega_n}\uparrow f_{|\Omega_n}$ yields
  $g_{|\Omega_n}\ge f_{|\Omega_n}$. This is true for all $n$;
  therefore, $g\ge f$. It follows that $f=\sup_\alpha f_\alpha$.
\end{proof}

\begin{theorem}
  $C(\mathbb R^m)$ has the countable sup property.
\end{theorem}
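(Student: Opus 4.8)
The plan is to reduce to the compact case via Lemma~\ref{restr} and a diagonal argument, and then to establish the countable sup property for $C(K)$ with $K$ compact metric by comparing the lattice supremum in $C(K)$ with the pointwise supremum. Concretely, let $f_\alpha\uparrow f$ in $C(\mathbb R^m)$. By Lemma~\ref{restr}, ${f_\alpha}_{|\Omega_n}\uparrow f_{|\Omega_n}$ in $C(\Omega_n)$ for every $n$, and each $\Omega_n$ is compact. Granting that every $C(\Omega_n)$ has the countable sup property, for each $n$ I would extract an increasing sequence of indices $\bigl(\alpha^{(n)}_k\bigr)_k$ with $\sup_k{f_{\alpha^{(n)}_k}}_{|\Omega_n}=f_{|\Omega_n}$. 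The countable collection $\bigl\{\alpha^{(n)}_k\mid n,k\in\mathbb N\bigr\}$ lies in a directed set, so I can enumerate it and inductively build a single increasing sequence $(\beta_i)$ that dominates each $\alpha^{(n)}_k$. Then $\sup_i{f_{\beta_i}}_{|\Omega_n}=f_{|\Omega_n}$ for every $n$, and the converse direction of Lemma~\ref{restr} yields $\sup_i f_{\beta_i}=f$ in $C(\mathbb R^m)$, as required.

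It remains to treat a compact metric space $K$. Given $f_\alpha\uparrow f$ in $C(K)$, put $g(x)=\sup_\alpha f_\alpha(x)$; this pointwise supremum is lower semicontinuous and satisfies $g\le f$. Since a continuous function $h$ dominates every $f_\alpha$ precisely when $h\ge g$ pointwise, $f$ is the least continuous majorant of $g$. The crucial step, and the one I expect to be the \emph{main obstacle} since in general $g\ne f$, is to show that the set $\{g=f\}$ is dense in $K$. To this end I would argue by contradiction: if some nonempty open $U$ satisfied $g<f$ throughout $U$, then, writing $U=\bigcup_k\bigl\{x\in U\mid f(x)-g(x)\ge\tfrac1k\bigr\}$ as a countable union of relatively closed sets (using that $f-g$ is upper semicontinuous), the Baire category theorem would supply a nonempty open $V\subseteq U$ and a $k$ with $g\le f-\tfrac1k$ on $V$. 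Subtracting from $f$ a continuous Urysohn bump supported in $V$, with values in $[0,\tfrac1k]$ and positive somewhere, would produce a continuous majorant of $g$ lying strictly below $f$, contradicting the minimality of $f$.

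Finally, with $\{g=f\}$ dense I would choose a countable set $D\subseteq\{g=f\}$ that is dense in $K$ (a dense subset of a compact metric space contains a countable dense subset). For each $d\in D$ and each $k$, the equality $\sup_\alpha f_\alpha(d)=g(d)=f(d)$ lets me pick an index with $f_\alpha(d)>f(d)-\tfrac1k$; collecting these countably many indices and passing, via directedness, to an increasing sequence $(\alpha_i)$ gives $\sup_i f_{\alpha_i}(d)=f(d)$ for all $d\in D$. Any continuous upper bound $h$ of $(f_{\alpha_i})$ then satisfies $h(d)\ge f(d)$ on the dense set $D$, hence $h\ge f$ on all of $K$ by continuity; therefore $\sup_i f_{\alpha_i}=f$ in $C(K)$, which completes the compact case and hence the whole argument.
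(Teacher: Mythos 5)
Your proof is correct, and while it follows the paper's overall skeleton, it replaces the paper's key citation with a self-contained argument, so the two are worth comparing. Both proofs reduce to the compact balls $\Omega_n$ via Lemma~\ref{restr}, extract for each $n$ an increasing sequence of indices whose restrictions have supremum $f_{|\Omega_n}$, merge these countably many indices into a single increasing sequence (the paper builds a doubly-indexed increasing array $(\alpha_{n,k})$ and takes the diagonal $\alpha_{k,k}$; you enumerate all indices and inductively dominate them --- these are equivalent, and yours is if anything slightly cleaner), and finish with the converse direction of Lemma~\ref{restr}. The genuine difference is in how the countable sup property of $C(\Omega_n)$ is obtained. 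The paper simply cites the theorem that any vector lattice admitting a strictly positive functional has the countable sup property \cite[Theorem~2.6]{Aliprantis:03}, the functional being the Riemann integral on $C(\Omega_n)$. You instead prove the compact case from scratch: the pointwise supremum $g$ is lower semicontinuous with $f$ as its least continuous majorant; upper semicontinuity of $f-g$ plus Baire category shows $\{g=f\}$ is dense (else a Urysohn bump subtracted from $f$ would give a smaller continuous upper bound, contradicting minimality --- the same bump trick the paper uses inside its proof of Lemma~\ref{restr}); and second countability of $\Omega_n$ then lets countably many indices, chosen at a countable dense subset of $\{g=f\}$, pin down the supremum. Your route is more elementary and self-contained, exploiting the metric/separability structure of $\Omega_n$ directly; the paper's route is shorter and leverages general vector lattice theory, with the strictly-positive-functional criterion applying even in settings where separability is unavailable but a fully supported integral exists. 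All the individual steps of your argument check out, including the semicontinuity claims, the Baire argument on the open set $U$, and the passage from the dense set $D$ to the lattice supremum.
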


\begin{proof}
  Let $(f_\alpha)_{\alpha\in\Lambda}$ be a net in $C(\mathbb R^m)$
  indexed by some directed set $\Lambda$ such that
  $f_\alpha\uparrow f$ for some $f\in C(\mathbb R^m)$.  Fix
  $n\in\mathbb N$. By Lemma~\ref{restr}, we have
  ${f_\alpha}_{|\Omega_n}\uparrow f_{|\Omega_n}$ in $C(\Omega_n)$.
  Recall that every vector lattice which admits a strictly positive
  functional has the countable sup property; see, e.g.,
  \cite[Theorem~2.6]{Aliprantis:03}. Since the Riemann integral is a
  strictly positive functional on $\Omega_n$, it follows that
  $C(\Omega_n)$ has the countable sup property. Therefore, the net
  $({f_\alpha}_{|\Omega_n})$ has a subsequence whose supremum in
  $C(\Omega_n)$ is $f_{|\Omega_n}$.

  We will now inductively construct a double sequence $(\alpha_{n,k})$ in
  $\Lambda$ indexed by pairs $(n,k)\in\mathbb N^2$ satisfying the
  following two conditions
  \begin{enumerate}
  \item $(\alpha_{n,k})$ is increasing: if $n_1\le n_2$ and $k_1\le
    k_2$ then $\alpha_{n_1,k_1}\le\alpha_{n_2,k_2}$;
  \item for every $n$ we have ${f_{\alpha_{n,k}}}_{|\Omega_n}\uparrow
    f_{|\Omega_n}$ as $k\to\infty$.
  \end{enumerate}
  For $n=1$, let $(\alpha_{1,k})_{k=1}^\infty$ be an increasing
  sequence in $\Lambda$ such that ${f_{\alpha_{1,k}}}_{|\Omega_1}\uparrow
    f_{|\Omega_1}$ as $k\to\infty$; it exists by the preceding
    paragraph.

  Suppose that we have constructed $\alpha_{i,k}$ for all
  $i=1,\dots,n-1$ and all $k$. We will now construct $\alpha_{n,k}$
  for all $k$. Find an increasing sequence
  $(\beta_k)$ in $\Lambda$ such that
  ${f_{\beta_k}}_{|\Omega_n}\uparrow f_{|\Omega_n}$ in
  $C(\Omega_n)$ as $k\to\infty$. Now choose $\alpha_{n,1}$ to be any index greater
  than $\beta_1$ and $\alpha_{n-1,1}$. Choose $\alpha_{n,2}$ to be any index greater
  than $\beta_2$, $\alpha_{n-1,2}$, and $\alpha_{n,1}$. Proceed
  inductively: once we have defined
  $\alpha_{n,1},\dots,\alpha_{n,k-1}$, choose $\alpha_{n,k}$ to be any
  index greater than $\beta_k$, $\alpha_{n-1,k}$, and
  $\alpha_{n,k-1}$. This yields
  \begin{displaymath}
    f_{|\Omega_n}\ge {f_{\alpha_{n,k}}}_{|\Omega_n}\ge {f_{\beta_k}}_{|\Omega_n}\uparrow f_{|\Omega_n},
  \end{displaymath}
  so that ${f_{\alpha_{n,k}}}_{|\Omega_n}\uparrow f_{|\Omega_n}$ in $C(\Omega_n)$ as
  $k\to\infty$.  Also, it is easy to see that $(\alpha_{n,k})$ is
  increasing.

  Consider $(\alpha_{k,k})_{k=1}^\infty$. This is clearly an
  increasing sequence in $\Lambda$. If $n\le k$ then
  $\alpha_{n,k}\le\alpha_{k,k}$, so that
  $f_{\alpha_{n,k}}\le f_{\alpha_{k,k}}\le f$. This yields
  ${f_{\alpha_{k,k}}}_{|\Omega_n}\uparrow f_{|\Omega_n}$ in
  $C(\Omega_n)$ as $k\to\infty$. It now follows from Lemma~\ref{restr}
  that $f_{\alpha_{k,k}}\uparrow f$ in $C(\mathbb R^m)$.
\end{proof}

\begin{corollary}\label{odense-pos}
  For every $0\le f\in C(\mathbb R^m)$ there exists a sequence $(h_n)$
  of positive piecewise affine functions such that $h_n\uparrow f$ in
  $C(\mathbb R^m)$.
\end{corollary}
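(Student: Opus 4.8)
The plan is to assemble the corollary from the two facts established immediately before it, so the proof should be quite short. The first ingredient is the order-density observation at the start of this section: since $\mathcal{PA}$ is an order dense sublattice of $C(\mathbb R^m)$ (a consequence of Lemma~\ref{affine-Uryson}), every $0\le f\in C(\mathbb R^m)$ is the supremum of an increasing net $(f_\alpha)$ in $\mathcal{PA}_+$, that is, $f_\alpha\uparrow f$ in $C(\mathbb R^m)$. I would recall explicitly that this net may be taken as $A=\{y\in\mathcal{PA}\mid 0\le y\le f\}$, which is upward directed because $\mathcal{PA}$ is a sublattice (if $y_1,y_2\in A$ then $y_1\vee y_2\in A$), so that viewing $A$ as an increasing net is legitimate.

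The second ingredient is the preceding theorem, that $C(\mathbb R^m)$ has the countable sup property. Applying this to the increasing net $(f_\alpha)$ with $\sup_\alpha f_\alpha=f$ yields, directly from the definition of the countable sup property, an increasing subsequence $(f_{\alpha_k})$ with the same supremum, i.e.\ $f_{\alpha_k}\uparrow f$ in $C(\mathbb R^m)$. Setting $h_n=f_{\alpha_n}$, each $h_n$ is a member of the original net and is therefore a positive piecewise affine function, and $h_n\uparrow f$, which is exactly the desired conclusion.

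I do not expect a genuine obstacle here, since all the substantive work has already been carried out in Lemma~\ref{affine-Uryson} (for order density) and in the countable sup property theorem. The only two points deserving a sentence of care are, first, confirming that the order-density statement genuinely supplies an \emph{increasing} net rather than an unstructured family (handled by the upward-directedness of $A$ above), and second, observing that passing to a subsequence preserves membership in $\mathcal{PA}_+$, which is immediate. Thus the corollary follows by simply chaining the two results.
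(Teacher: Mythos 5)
Your proof is correct and is precisely the argument the paper intends: the order density of $\mathcal{PA}$ (via Lemma~\ref{affine-Uryson} and the directedness of $A=\{y\in\mathcal{PA}\mid 0\le y\le f\}$) supplies the increasing net $f_\alpha\uparrow f$, and the countable sup property theorem extracts the desired increasing sequence. The paper leaves this chaining implicit (it states the corollary without proof, having set it up as the answer to Question~\ref{q:seq}), so your write-up matches and even makes explicit the two small points worth checking.
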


Thus, Question~\ref{q:seq} has the affirmative answer. We now pass to
Question~\ref{q:non-pos}. We answer it through order convergence. In
literature, there are two slightly different definitions of an order
convergent net in a vector lattice, which are, generally, not
equivalent. We refer the reader to
\cite{Anderson:67,Abramovich:02,Abramovich:05}. In our case, it will
be irrelevant which of the two definitions is used for the following
two reasons: first, both definitions yield the properties that we will
be using and, second, by~\cite{Anderson:67}, the two definitions agree
in vector lattices which have the countable sup property; hence, they
agree in $C(\mathbb R^m)$.

\begin{proposition}
  Every function in $C(\mathbb R^m)$ is the order limit of an order
  convergent sequence of piecewise affine functions.
\end{proposition}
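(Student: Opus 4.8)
The plan is to reduce the general case to the positive case already settled in Corollary~\ref{odense-pos} via the decomposition $f=f^+-f^-$, and then to recombine the two resulting monotone sequences using the linearity of order convergence. Since $\mathcal{PA}$ is a vector sublattice of $C(\mathbb R^m)$, a difference of two piecewise affine functions is again piecewise affine, so this recombination keeps us inside $\mathcal{PA}$.

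First I would record the elementary observation that a monotone sequence which admits a supremum is automatically order convergent to that supremum, under either of the two definitions of order convergence. Indeed, if $h_n\uparrow f$ in $C(\mathbb R^m)$, then $p_n:=f-h_n$ satisfies $p_n\downarrow 0$ and $\abs{h_n-f}=p_n$; this witnesses order convergence in the sense of the first definition, while using the increasing sequence $(h_n)$ as the lower net and the constant sequence $f$ as the upper net witnesses it in the sense of the second. Hence $h_n\uparrow f$ implies that $(h_n)$ order converges to $f$.

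Now, given an arbitrary $f\in C(\mathbb R^m)$, I would write $f=f^+-f^-$ with $0\le f^+,f^-\in C(\mathbb R^m)$. By Corollary~\ref{odense-pos} there are sequences $(g_n)$ and $(p_n)$ in $\mathcal{PA}_+$ with $g_n\uparrow f^+$ and $p_n\uparrow f^-$, so by the previous paragraph $(g_n)$ order converges to $f^+$ and $(p_n)$ order converges to $f^-$. Setting $q_n=g_n-p_n\in\mathcal{PA}$, I would then invoke the linearity of order convergence to conclude that $(q_n)$ order converges to $f^+-f^-=f$, which is exactly the desired statement.

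The only point requiring care is this last linearity step, namely that a difference of two order convergent sequences order converges to the difference of the limits. This is precisely where the remark preceding the proposition is used: both definitions of order convergence are stable under addition and scalar multiplication, so $(-p_n)$ order converges to $-f^-$ and $(g_n+(-p_n))$ order converges to $f^+-f^-$; moreover the two definitions agree on $C(\mathbb R^m)$ since it has the countable sup property. I do not anticipate any genuine obstacle beyond citing these standard facts, as the substantive content has already been carried by the countable sup property and Corollary~\ref{odense-pos}.
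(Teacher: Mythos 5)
Your proof is correct and takes essentially the same route as the paper's: decompose $f=f^+-f^-$, apply Corollary~\ref{odense-pos} to each part, and take the difference of the two resulting sequences. The paper compresses into a single ``it follows'' the two facts you spell out explicitly (that a monotone sequence with a supremum is order convergent, and that order convergence is linear), so your write-up is just a more detailed version of the same argument.
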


\begin{proof}
  Let $f\in C(\mathbb R^m)$. Decompose it as $f=f^+-f^-$. By
  Corollary~\ref{odense-pos}, there exist positive sequences $(g_n)$
  and $(h_h)$ in $\mathcal{PA}$ such that $g_n\uparrow f^+$ and
  $h_n\uparrow f^-$ in $C(\mathbb R^m)$. It follows that the sequence
  $(g_n-h_n)$ converges in order to $f$ in $C(\mathbb R^m)$.
\end{proof}

\bigskip
\textbf{Acknowledgement.} We would like to thank Foivos Xanthos for
valuable discussions.

\end{document}